\documentclass[12pt]{amsart}

\usepackage{amsfonts,amssymb,cleveref}
\usepackage{amsmath}
\usepackage{amsthm}

\newtheorem{thm}{Theorem}[section]
\newtheorem*{thm*}{Theorem}
\newtheorem*{conj*}{Conjecture}

\newtheorem{prop}[thm]{Proposition}

\theoremstyle{remark}

\theoremstyle{definition}

\newtheorem{defn}[thm]{Definition}

\DeclareMathOperator{\Cay}{Cay}
\DeclareMathOperator{\Aut}{Aut}
\DeclareMathOperator{\Dih}{Dih}

\begin{document}

\title{Two families of graphs that are Cayley on nonisomorphic groups}

\author{Joy Morris} 
\address{Department of Mathematics and Computer Science\\
University of Lethbridge\\
Lethbridge, AB. T1K 3M4}
\email{joy.morris@uleth.ca}\thanks{This work was supported by the Natural Science and Engineering Research Council of Canada (grant RGPIN-2017-04905). The second author worked on this project as a summer research experience supported out of this grant.}
\author{Josip Smol\v{c}i\'{c}} 
\address{Department of Mathematics and Computer Science\\
University of Lethbridge\\
Lethbridge, AB. T1K 3M4}
\email{josip.smolcic@uleth.ca}

\begin{abstract}
A number of authors have studied the question of when a graph can be represented as a Cayley graph on more than one nonisomorphic group. The work to date has focussed on a few special situations: when the groups are $p$-groups; when the groups have order $pq$; when the Cayley graphs are normal; or when the groups are both abelian. In this paper, we construct two infinite families of graphs, each of which is Cayley on an abelian group and a nonabelian group. These families include the smallest examples of such graphs that had not appeared in other results.
\end{abstract}

\maketitle

\section{Introduction}

A Cayley graph $\Cay(G,S)$ on a group $G$ with connection set $S$, is the graph whose vertices are the elements of $G$, with two vertices $g_1$ and $g_2$ adjacent if and only if $g_2=sg_1$ for some $s \in S$. In order to ensure that this is a graph rather than a directed graph, we must require that $S=S^{-1}$; that is, $S$ is closed under inversion; if we omit this condition, we obtain digraphs (and an arc from $g_1$ to $g_2$ rather than an edge between them). Conventionally we also generally assume that the identity $e$ of $G$ is not in $S$; this avoids having loops at every vertex. Cayley graphs and digraphs are a major area of study, as their symmetries lead to many useful properties in the networks they represent.

It is well known (first observed by Sabidussi) that a (di)graph can be represented as a Cayley (di)graph on the group $G$ if and only if its automorphism group contains a subgroup isomorphic to $G$ in its regular action. However, a particular representation of a Cayley (di)graph may not be its only representation, either on a fixed group, or on different groups. Sometimes a particular representation may be more useful for practical purposes  than a different representation, so it is of interest to understand all possible representations.

The so-called ``Cayley Isomorphism" (CI) problem studies whether or not all representations for a given Cayley graph on some fixed group $G$ can be determined purely algebraically. It is therefore a large part of the question of when a Cayley graph on a group $G$ is isomorphic to another Cayley graph on the same group $G$ (or, equivalently, when there are two distinct regular subgroups isomorphic to $G$ in the automorphism group of the graph). The CI problem has been extensively studied by many researchers. For example, the papers \cite{babai,Dobson1995,Dobson2002,LiLP,Muz,Muz1} amongst many others, and the survey article \cite{Lisurvey} all deal with this question.

The question of when a Cayley graph on $G$ can be represented as a Cayley graph on some nonisomorphic group $H$ has also received some attention. Joseph in 1995 \cite{Joseph1995}  determined necessary and sufficient conditions for a Cayley digraph of order $p^2$ (where $p$ is prime), to be isomorphic to a Cayley digraph of both groups of order $p^2$ (\cite[Lemma 4]{DobsonW2002} provides a group theoretic version of this result).  The first author \cite{Morris1996,Morris1999} subsequently extended this result and determined necessary and sufficient conditions for a Cayley digraph of the cyclic group of order $p^k$, $k\ge 1$ and $p$ an odd prime, to be isomorphic to a Cayley digraph of some other group of order $p^k$.  The equivalent problem for $p=2$ (when both groups are abelian) was solved by Kov\'acs and Servatius \cite{KovacsS2012}.  In these cases, graphs that could be represented on both groups are all ``wreath" (or ``lexicographic") products, and their automorphism groups are significantly larger than the number of vertices. In contrast, when neither group is cyclic, \cite{MMV} shows that it is often possible to find Cayley digraphs that can be represented on two nonisomorphic $p$-groups (one abelian and the other not) whose automorphism group is only slightly larger than the original groups.

Digraphs of order $pq$ that are Cayley graphs of both groups of order $pq$, where $q\mid (p-1)$ and $p,q$ are distinct primes were determined by Dobson in \cite[Theorem 3.4]{Dobson2006a}.  Maru\v si\v c and the first author studied the question of which normal circulant graphs of square-free order are also Cayley graphs of a nonabelian group \cite{MarusicM2005}. Some of the graphs in our families fall into each of these categories, but neither of our families is limited to square-free orders.

\section{The families}

The first of these families may be known to researchers, but to the best of our knowledge no proof has previously appeared in the literature. A circulant graph is a Cayley graph on a cyclic group, and we use $D_k$ to denote the dihedral group of order $2k$.

\begin{prop}Let $\Gamma$ be a circulant graph on $n = 2k$ vertices. Then $\Gamma$ is a Cayley graph on $D_k$ and $C_n$.
\end{prop}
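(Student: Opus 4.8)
The plan is to invoke Sabidussi's criterion, recalled in the introduction: it suffices to exhibit a subgroup of $\Aut(\Gamma)$ that is isomorphic to $D_k$ and acts regularly on the $n=2k$ vertices. Write $\Gamma=\Cay(C_n,S)$ with $C_n=\langle x\rangle$, and identify the vertex set with $\mathbb{Z}_n$. Two kinds of automorphisms are available without any work: every left translation $L_a\colon g\mapsto a+g$ lies in $\Aut(\Gamma)$ (these give the regular copy of $C_n$), and, since $C_n$ is abelian and $S=-S$, the inversion map $\iota\colon g\mapsto -g$ also lies in $\Aut(\Gamma)$.

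Next I would set $\sigma=L_{x^2}=L_x^2$, which has order $k$ and generates the unique subgroup of order $k$ inside $\langle L_x\rangle\cong C_n$, and --- this is the crucial choice --- I would take the ``reflection'' to be $\tau=L_x\circ\iota\colon g\mapsto x-g$ rather than $\iota$ itself. A one-line check shows $\tau^2=\mathrm{id}$ and $\tau\sigma\tau^{-1}=\sigma^{-1}$, so $H:=\langle\sigma,\tau\rangle$ is a homomorphic image of the abstract dihedral group with presentation $\langle a,b\mid a^k=b^2=1,\ bab^{-1}=a^{-1}\rangle$; in particular $|H|\le 2k=n$.

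It then remains to see that $H$ is transitive on $\mathbb{Z}_n$, for then $n\mid|H|$, which forces $|H|=n$; hence $H$ is regular, and since the dihedral presentation above has order exactly $2k$ the epimorphism onto $H$ is an isomorphism, so $H\cong D_k$. Transitivity is immediate: the $\langle\sigma\rangle$-orbit of $0$ is the index-$2$ subgroup $\langle x^2\rangle$, while $\tau(0)=x\notin\langle x^2\rangle$, so the $H$-orbit of $0$ meets both cosets of $\langle x^2\rangle$ and is therefore all of $\mathbb{Z}_n$. Applying Sabidussi's criterion to $H$ completes the argument, and note that no small-$k$ cases need separate attention, since $\langle a,b\mid a^k=b^2=1,\ bab^{-1}=a^{-1}\rangle$ has order $2k$ for every $k\ge 1$.

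The one genuine obstacle is choosing the reflection correctly. The obvious candidate $\iota$ fixes the vertex $0$, so $\langle L_{x^2},\iota\rangle$ stabilizes the subgroup $\langle x^2\rangle$ setwise and fails to be transitive: it is the ``wrong'' one of the two dihedral subgroups of order $n$ living inside the dihedral group $\langle L_x,\iota\rangle\le\Aut(\Gamma)$. Replacing $\iota$ by $\tau=L_x\circ\iota$ selects the other such subgroup, whose rotations still invert $\sigma$ but whose reflections now interchange the two cosets of $\langle x^2\rangle$, supplying exactly the transitivity required.
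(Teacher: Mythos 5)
Your proposal is correct and follows essentially the same route as the paper: both exhibit the regular dihedral subgroup generated by translation by the square of a generator together with a ``reflection'' that is inversion composed with translation by a generator (your $\tau\colon g\mapsto x-g$ and the paper's $\beta\colon z\mapsto z^{-1}c^{-1}$ generate literally the same subgroup of $\Aut(\Gamma)$, since $\beta=\sigma^{-1}\tau$). Your write-up is if anything slightly more careful, since you make the transitivity/regularity step explicit where the paper only asserts it.
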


\begin{proof} 
  Let $\Gamma = \Cay(C_n, S)$, where $S \subset C_n$ is closed under inverses, and $C_n=\langle c\rangle$. By assumption, $\Gamma$ is a Cayley graph on $C_n$. 
  
  We must show that $\Gamma$ is also a Cayley graph on $D_k$. We do this by finding a regular subgroup of $\Aut(\Gamma)$ that is isomorphic to $D_k$. 
  
Define $\alpha$ by $\alpha(z) = z c^2$ and $\beta(z) = z^{-1}c^{-1}$ for $z \in V(\Gamma)=C$.
  
  We first show that $\alpha$ and $\beta$ are automorphisms. For every $u, v \in V(\Gamma)$ with $u \sim v$, there exists $s \in S$ such that $su = v$. It is not hard to see that 
  $$s \alpha(u) = suc^2 = vc^2 = \alpha(v).$$ 
  Also, since $S$ is closed under inverses and $u$ and $s$ are both elements of the abelian group $C$, we have
  $$s^{-1} \beta(u) =s^{-1}u^{-1}c^{-1} = (us)^{-1}c^{-1} = v^{-1}c^{-1} = \beta(v)$$ 
  as desired.
  
Since $n=2k$ is the order of $c$, it is clear that $\alpha$ has order $k$. Also $\beta^2 (z) = \beta(z^{-1}c^{-1}) = (z^{-1}c^{-1})^{-1}c^{-1}=czc^{-1} = z$, thus $\beta$ has order $2$. Finally, $$\beta^{-1}\alpha \beta (z) = \beta\alpha (z^{-1}c^{-1}) = \beta(z^{-1}c)=zc^{-2} = \alpha^{-1}(z),$$ so $\beta$ inverts $\alpha$. We conclude that $\langle \alpha, \beta\rangle \cong D_k$ is a regular subgroup of $\Aut(\Gamma)$, so $\Gamma$ is a Cayley graph on $D_k$.
\end{proof}

The second family has slightly more restrictions, but is at the same time potentially more interesting. To understand it, we must define the family of generalised dihedral groups.

\begin{defn}
Let $A$ be an abelian group. Define the group $\Dih(A,x)=\langle A, x\rangle$, where $x^2=1$ and $x^{-1}ax=a^{-1}$ for every $a \in A$.  
\end{defn}

In the special case where $A$ is cyclic, this is the usual dihedral group. Notice that the group $\Dih(A,x)$ is abelian if and only if $A$ is an elementary abelian $2$-group, in which case $\Dih(A,x)$ is the elementary abelian $2$-group whose rank is one higher than the rank of $A$.

\begin{thm}
Let $A$ be an abelian group, and let $D=\Dih(A,x)$ be the corresponding generalised dihedral group. Let $S \subseteq D$ be closed under inversion, and let $\Gamma = \Cay (D, S)$.

Suppose that there is some $y \in xA$ such that $ya \in S \cap xA$ if and only if $ya^{-1} \in S \cap xA$.  Then $\Aut(\Gamma)$ has a subgroup isomorphic to $A \times C_2$, so $\Gamma$ is also a Cayley graph on the abelian group $A\times C_2$.
\end{thm}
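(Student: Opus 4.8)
The plan is to mimic the proof of the Proposition: exhibit a regular subgroup of $\Aut(\Gamma)$ isomorphic to $A \times C_2$. Under the convention used here the connection set acts on the left, so every right translation $\rho_a\colon z\mapsto za$ with $a\in A$ is automatically an automorphism of $\Gamma$, and $R_A:=\{\rho_a : a\in A\}$ is a subgroup isomorphic to $A$ (the restriction of the right regular representation). This subgroup has exactly two orbits on $V(\Gamma)$, namely the cosets $A$ and $xA$, so what is missing is a single involution that interchanges these orbits and centralizes $R_A$.

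My candidate is the left translation $\tau\colon z\mapsto yz$, where $y\in xA$ is the element given in the hypothesis. First I would record two elementary facts: every element of $xA$ is an involution (so $y^2=1$ and $y^{-1}=y$), and conjugation by $y$ inverts $A$ (since $y\in xA$ and $A$ is abelian), which also yields the identity $ay=ya^{-1}$ for all $a\in A$. Using $y^2=1$ it is immediate that $\tau$ is an involution, that $\tau$ swaps the cosets $A$ and $xA$ (hence $\tau\notin R_A$), and that $\tau$ commutes with every $\rho_a$ by associativity. Granting that $\tau\in\Aut(\Gamma)$, it follows that $\langle R_A,\tau\rangle\cong A\times C_2$; this group has order $2|A|=|D|=|V(\Gamma)|$ and is transitive (the $\rho_a$ act transitively on $A$ and $\tau$ carries $A$ onto $xA$), hence regular, and by Sabidussi's observation $\Gamma$ is a Cayley graph on $A\times C_2$.

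The real content, and the only place the hypothesis on $S$ enters, is checking that $\tau$ is an automorphism. Since $u\sim v$ if and only if $vu^{-1}\in S$, the map $\tau$ is an automorphism precisely when $vu^{-1}\in S$ implies $(yv)(yu)^{-1}=y(vu^{-1})y^{-1}\in S$, i.e. when $ySy^{-1}=S$. I would verify this by splitting $S=(S\cap A)\cup(S\cap xA)$. For $s\in S\cap A$, conjugation by $y$ gives $s^{-1}$, which lies in $S$ because $S=S^{-1}$, so $y(S\cap A)y^{-1}=S\cap A$ automatically. For $s\in S\cap xA$, write $s=ya$ with $a=y^{-1}s\in A$; a one-line computation using $y^2=1$ and $ay=ya^{-1}$ gives $ysy^{-1}=ya^{-1}$. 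Hence $y(S\cap xA)y^{-1}=S\cap xA$ is exactly the assertion that $ya\in S\cap xA$ if and only if $ya^{-1}\in S\cap xA$, which is the hypothesis. Combining the two pieces gives $ySy^{-1}=S$, so $\tau\in\Aut(\Gamma)$, and the proof is complete.

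I expect the only real obstacle to be bookkeeping: keeping the left/right translation conventions consistent with the paper's definition of $\Cay(D,S)$, and organizing the coset case-analysis in the verification $ySy^{-1}=S$. Once the identities $y^2=1$ and $ay=ya^{-1}$ are in place each case collapses to a single manipulation, and the hypothesis has essentially been reverse-engineered to make the ``cross'' case (edges running between $A$ and $xA$) go through.
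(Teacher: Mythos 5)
Your proof is correct and follows essentially the same route as the paper's: the same subgroup generated by the right translations $\rho_a$ ($a\in A$) together with the left translation by $y$, with the same split of $S$ into $S\cap A$ (handled by inverse-closure) and $S\cap xA$ (handled by the hypothesis). Packaging the verification of the automorphism property as $ySy^{-1}=S$, rather than directly exhibiting the element $yb^{-1}\in S$ joining $\beta(u)$ to $\beta(v)$ as the paper does, is only a cosmetic difference.
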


\begin{proof}
First note that if $A$ is an elementary abelian $2$-group, then $\Dih(A,x) \cong A \times C_2$ so there is nothing to prove.

For every $a \in A$, define the map $\alpha_a$ on the vertices of $\Gamma$ by $\alpha_a (z) = za$, and define the map $\beta$ by $\beta (z) = yz$ for all $z \in V(\Gamma)=D$.
  Let $H = \langle \alpha_{a}, \beta : a \in A\rangle$. We claim that $H \cong A \times C_2$ is a subgroup of $\Aut(\Gamma)$.
  
  First we show that $H \cong A \times C_2$. It should be clear that $\langle \alpha_a: a \in A\rangle \cong A$. Furthermore, since $y \in xA$ we have $y=xa$ for some $a \in A$, so $y^2=xaxa=a^{-1}a=1$, meaning that $\beta$ has order $2$. It remains only to show that $H$ is abelian. Again, since $A$ is abelian, we really only need to show that $\beta$ commutes with every $\alpha_a$. This is easy, since $$\beta\alpha_a(z)=\beta(za)=yza=\alpha_a(yz)=\alpha_a\beta(z).$$

  For the remainder of the proof, we must show that $H$ consists of automorphisms of $\Gamma$. Let $u, v \in V(\Gamma)$ where $u \sim v$, so there is some $s \in S$ such that $v=su$. Now let $\alpha_a \in H$. We have
  $$s \alpha_a(u) = sua = va = \alpha_a(v),$$
  so $\alpha_a(u) \sim \alpha_a(v)$ if and only if $u \sim v$, meaning that $\alpha_a$ is an automorphism of $\Gamma$.
  
  To show that $\beta$ is also an automorphism of $\Gamma$, we will require the extra conditions we assumed for $S$: that $S$ is inverse-closed (which is necessary for $\Gamma$ to be a graph rather than a digraph) and also that $ya \in S$ if and only if $ya^{-1} \in S$. We will also need the observation that for every $ a \in A$, $y^{-1}ay=yay=a^{-1}$; this follows immediately from the definitions of $y$ and $x$ and the fact that $A$ is abelian.
  
  Again, we take $u, v \in V(\Gamma)$ where $u \sim v$, so there is some $s \in S$ such that $v=su$. We deal separately with the possibilities that $s \in A$ or $s \in xA=yA$. 
  
  Suppose first that $s \in A$.
 Since $S$ is closed under inverses
  $$s^{-1} \beta(u) = s^{-1}yu = ysu = yv = \beta(v).$$
  Thus $\beta(u) \sim \beta(v)$ if and only if $u \sim v$, meaning that $\beta$ is an automorphism of $\Gamma$.
  
Now suppose that $s \in xA=yA$, say $s=yb$ where $b \in A$. Then $yb^{-1}$ is also in $S$, and
  $$yb^{-1} \beta(u) = yb^{-1}yu = y(yb)u =ysu=yv =\beta(v).$$
  Thus $\beta(u) \sim \beta(v)$ if and only if $u \sim v$, meaning that $\beta$ is an automorphism of $\Gamma$.
\end{proof}

In the case where $A$ is not an elementary abelian $2$-group, we have shown that such graphs are Cayley graphs on both the abelian group $A \times C_2$ and the nonabelian group $\Dih(A,x)$, which are nonisomorphic. It is easy to construct examples of graphs that satisfy our restriction on the connection set; for example, any connection set that contains exactly one element of $xA$ will have this property.

It would be nice to completely characterise the Cayley graphs on $\Dih(A,x)$ that are also Cayley on $A \times C_2$. This would, however, require a fairly deep understanding of the full automorphism group of any such graph (for example, whether or not the cosets of $A$ are blocks of imprimitivity for the automorphism group will be important) that is beyond the scope of this project.


\begin{thebibliography}{99}
\bibitem{babai} L.~Babai, Isomorphism problem for a class of point-symmetric structures, \textit{Acta Math. Acad. Sci.
Hungar.} \textbf{29} (1977), 329--336.

\bibitem{Dobson1995} E.~Dobson, Isomorphism problem for Cayley graphs of ${\mathbb Z}\sp 3\sb
	p$, \textit{Discrete Math.} \textbf{147} (1995), 87--94.
	
\bibitem{Dobson2002}
Edward Dobson, \emph{On the {C}ayley isomorphism problem}, Discrete Math.
  \textbf{247} (2002), no.~1-3, 107--116. \MR{MR1893021 (2003c:05106)}
  
  \bibitem{Dobson2006a}
Edward Dobson, \emph{Automorphism groups of metacirculant graphs of order a product
  of two distinct primes}, Combin. Probab. Comput. \textbf{15} (2006), no.~1-2,
  105--130. \MR{MR2195578 (2006m:05108)}

  
  \bibitem{DobsonW2002}
Edward Dobson and Dave Witte, \emph{Transitive permutation groups of
  prime-squared degree}, J. Algebraic Combin. \textbf{16} (2002), no.~1,
  43--69. \MR{MR1941984 (2004c:20007)}


\bibitem{Joseph1995}
Anne Joseph, \emph{The isomorphism problem for {C}ayley digraphs on groups of
  prime-squared order}, Discrete Math. \textbf{141} (1995), no.~1-3, 173--183.
  \MR{1336683 (96e:05071)}

\bibitem{KovacsS2012}
Istv{\'a}n Kov{\'a}cs and Mary Servatius, \emph{On {C}ayley digraphs on
  nonisomorphic 2-groups}, J. Graph Theory \textbf{70} (2012), no.~4, 435--448.
  \MR{2957057}


\bibitem{Lisurvey}C.~H.~Li,  On isomorphisms of finite Cayley graphs--a survey, \textit{Discrete Math.} \textbf{256} (2002), 301--334.

\bibitem{LiLP}C.~H.~Li,~Z.~P.~Lu, P.~Palfy, Further restrictions on the structure of finite CI-groups, \textit{J. Algebr. Comb. }\textbf{26} (2007), 161--181.

\bibitem{MarusicM2005}
Dragan Maru{\v{s}}i{\v{c}} and Joy Morris, \emph{Normal circulant graphs with
  noncyclic regular subgroups}, J. Graph Theory \textbf{50} (2005), no.~1,
  13--24. \MR{2157535 (2006c:05073)}
  
\bibitem{MMV}
Luke Morgan, Joy Morris, and Gabriel Verret, \emph{Digraphs with small automorphism groups that are Cayley on two nonisomorphic groups} The Art of Discrete and Applied Mathematics \textbf{3} (2020),  \#P1.01.

\bibitem{Morris1996}
Joy Morris, \emph{Isomorphic {C}ayley graphs on different groups}, Proceedings
  of the {T}wenty-seventh {S}outheastern {I}nternational {C}onference on
  {C}ombinatorics, {G}raph {T}heory and {C}omputing ({B}aton {R}ouge, {LA},
  1996), vol. 121, 1996, pp.~93--96. \MR{MR1431979 (97k:05102)}

\bibitem{Morris1999}
\bysame, \emph{Isomorphic {C}ayley graphs on nonisomorphic groups}, J. Graph
  Theory \textbf{31} (1999), no.~4, 345--362. \MR{MR1698752 (2000e:05085)}



\bibitem{Muz}M.~Muzychuk, On the isomorphism problem for cyclic combinatorial objects, \textit{Discrete Math.} \textbf{197/198} (1999), 589--606.

\bibitem{Muz1}M.~Muzychuk, A solution of the isomorphism problem for circulant graphs, \textit{Proc. London Math. Soc. }\textbf{88} (2004), 1--41.

\end{thebibliography}
\end{document}